\title{Branching random walks and contact processes on Galton-Watson trees}
\author{
Wei Su\\
Department of Statistics\\
The University of Chicago\\
}
\date{\today}
\newtheorem{Theorem}{Theorem}[section]
\newtheorem{Proposition}[Theorem]{Proposition}
\begin{document}
\maketitle

\begin{abstract}
We consider branching random walks and contact processes on infinite, connected, locally finite graphs whose reproduction and infectivity rates across edges are inversely proportional to vertex degree. We show that when the ambient graph is a Galton-Watson tree then, in certain circumstances, the branching random walks and contact processes will have \textbf{weak survival} phases. We also provide bounds on critical values.
\end{abstract}

\section{Introduction}\label{sec1.introduction}

There has been considerable interest in the behavior of branching
random walks (BRW), contact processes (CP), and other related interacting
particle systems on trees and other nonamenable graphs in recent
years. These processes may exhibit a \textbf{weak survival} phase on trees and other nonamenable graphs which does not occur on the integer lattice. In the weak survival phase, the population survives
globally with positive probability, but eventually vacates any fixed
vertex with probability one. The weak survival phase of BRW has been studied, for example, in \cite{Candellero-Gilch-Muller, Hueter-Lalley,Liggett,Madras-Schinazi,Pemantle-Stacey},
and for CP in \cite{lalley,lalley-sellke:cp1,liggett2,Lyons,pemantle,Pemantle-Stacey,stacey}.

In this paper, we introduce a discrete-time BRW, where particles
reproduce as in an ordinary Galton-Watson (GW) process, regardless of their
locations in the ambient graph, and then move as in a random walk. We also introduce
a closely related version of CP. Formal definitions are given in section \ref{sec2.notation} below. 
We study BRW, which always dominates CP, in order to give natural upper bounds for CP.
Our main result (Theorem \ref{5.CP-GW}) is on the existence of a weak survival
phase for CP. 

Our BRWs and CPs differ in an important qualitative respect from those studied by
Pemantle and Stacey \cite{Pemantle-Stacey}, where the reproduction
rates depend on location (in particular, they depend linearly on the
vertex degree). This leads to rather different behaviors on
inhomogeneous graphs. For BRW, we give necessary and sufficient conditions for
the existence of the weak survival phase in terms of the spectral
radius of simple random walk (SRW) on the graph, citing results in \cite{Muller}. This requires us to calculate the
spectral radius of SRW on infinite GW trees. Then we use various techniques to provide upper and lower bounds 
for the critical values of the CP on infinite GW trees, and show that there exists a weak survival phase in certain 
circumstances.

We will deal with GW trees with offspring distribution $F_T=\{p_{k} \}_{k\geq 0}$. For conciseness and consistence, throughout this paper we will assume $p_0=0$. One thing to point out is that when $p_0>0$ most results concerning BRW in this paper can be obtained as well, however arguments for CP fail to work.

\paragraph{Outline.}The remainder of this paper is organized as
follows.  In Section~\ref{sec2.notation} we give formal definitions. General properties of BRW and its
connection to SRW are given in section~\ref{sec3.connection}. Section~\ref{sec4.CP} shows that for CP there is weak survival phase on certain GW trees.

\section{Definitions and notations}\label{sec2.notation}

All processes considered in this paper
will live on infinite, connected, locally finite
graphs. We will use  $G=(V,\mathcal{E})$ to denote such a graph,  where
$V$ is the vertex set and $\mathcal{E}$ is the edge set. 
These graphs will themselves be constructed according to some
random mechanism,  and we will use 
$G_\omega=(V_{\omega},\mathcal{E}_{\omega})$ to denote realizations of random graphs. 
In all random graph constructions we shall consider, there will be a distinguished vertex $\varrho$
designated the \emph{root}. 
Say that two vertices $x,y\in V$
are neighbors if and only if they are connected in $G$, or
equivalently $(x,y)\in \mathcal{E}$.

\textbf{Branching random walk} (BRW) is a
discrete-time stochastic process on $G$ defined in
the following way. It is a special case of discrete branching Markov chain in \cite{Muller}, with the underlying Markov chain being SRW. At time $n=0$ there is  one
particle at the root $\varrho$. Given the population at time $n$, the population at time $n+1$ is
generated in two steps (in the following definition independence means independence of other particle's behavior and the history up to time $n$):

\medskip
(1) \textbf{Particle reproduction}, where each particle currently in the
system dies and independently gives rise to a random number of offspring, according to a common distribution $F_R$.

\medskip (2) \textbf{Particle dispersal}, where each newborn particle
makes an independent SRW step from the vertex where it is born to a 
neighboring vertex on the graph. In other
words, each new particle chooses one of the neighbors of the vertex where it
is born, and then move to it. The choice is made uniformly at random.

\medskip

If the ambient graph $G$ is a tree, then it is bipartite, so at even (odd) times particles are
located only at even (odd) depths from the root.

To emphasize the dependence of the process on the underlying graph
$G$, we use $\mathbb{P}_{G}$ to denote law of BRW on $G$.
Denote the number of particles at
vertex $v$ at time $n$ by $N_n(v)$. We name the following events respectively.

\medskip (1) $\{\lim_{n\rightarrow\infty} \sum_{v\in V}N_n(v)=0\}$: extinction;

\medskip (2) $\{\liminf_{n\rightarrow\infty} \sum_{v\in V}N_n(v)\geq 1\}$: global survival;

\medskip (3) $\{\limsup_{n\rightarrow\infty}N_n(\varrho)\geq 1\}$: local survival at the vertex $\varrho$.
\medskip

Clearly the event of local survival at any vertex implies the event of global survival. As the underlying graph is connected the definition of local survival does not depend on the choice of $\varrho$. So we will use the term ``local survival" without indicating the root $\varrho$. Unless there is local survival, eventually not only every vertex is free of particles but also every finite subset.

Correspondingly, there are 3 phases.

\medskip (1) If with probability one, the BRW dies out, i.e.
\[
\mathbb{P}_{G}\left(\lim_{n\rightarrow\infty} \sum_{v\in V}N_n(v)=0\right)=1,
\]
we say the BRW is at the \textbf{subcritical} phase.

\medskip (2)
If with positive probability, the BRW survives locally (and thus globally), i.e.
\[
\mathbb{P}_{G}\left(\limsup_{n\rightarrow\infty}N_n(\varrho)\geq 1\right)>0,
\]
we say the BRW is at the \textbf{strong survival} phase. Our definition of strong survival phase corresponds to the notion of \textbf{strong recurrence} in \cite{Muller}.

\medskip (3)
If with probability one, the BRW does not survive locally; but with positive probability it survives globally, i.e.
\[
\mathbb{P}_{G}\left(\limsup_{n\rightarrow\infty}N_n(\varrho)\geq 1\right)=0,
\]
\[
\mathbb{P}_{G}\left(\liminf_{n\rightarrow\infty} \sum_{v\in
V}N_n(v)\geq 1\right)>0,
\]
we say the BRW is at the \textbf{weak survival} phase.

In a BRW (as defined above), the total number of
particles in generations $n=0,1,2,\dotsc$ evolves as a GW
process with offspring distribution $F_R=\{f_k\}_{k\geq 0}$ with mean
$\mu=\sum_k kf_k$, so global survival occurs if and only if $\mu>1$
(in the BRWs studied in \cite{Pemantle-Stacey} this is not the case).

Assume that the particle reproduction law $F_R=\{f_k\}_{k\geq0}$ is
fixed.  Then whether or not BRW on graph $G$ exhibits weak
survival phase depends only on the geometry of $G$. Our first main result (Theorem \ref{3.main})
concerns the case where $G$ is a GW tree constructed
using an offspring distribution $F_T =\{p_{k} \}_{k\geq 0}$.
It will be shown that the existence of the weak survival phase is determined by $h_{\min}$, the minimal offspring number for $F_T$, that
is, $h_{\min}=\min \{i\,:\, p_{i}>0\}$. By our assumption $h_{\min}\geq 1$.

\textbf{Continuous-time BRW} is a continuous-time Markov process
defined as follows.  At time $t=0$ there is one particle at the root
$\varrho$. Each particle gives rise to a new particle with rate
$\lambda$, meanwhile dies with rate 1, and its behavior is
independent of all other particles and the history. When a new particle is born, it
takes an instantaneous independent SRW step to one of the neighbors of the vertex
where it is born. In section \ref{sec4.CP} we will show that existence
of weak survival phase of the continuous-time BRW is essentially the
same problem as that for the discrete-time model, so it suffices to
study the discrete-time model.

\textbf{Contact process} (CP) is a continuous-time Markov process evolving in the following way (in the following definition independence means independence of other particle's behavior and the history). We start with 1 particle at $\varrho$ at time 0. Then, 

(1) Each particle gives rise to a new particle at rate $\lambda$ independently, and the newborn particle independently picks a neighboring vertex on the graph uniformly at random and makes an instantaneous movement to the picked vertex.

(2) Each particle dies with rate 1 independently.

(3) Each vertex can hold at most 1 particle. So if a newborn particle moves to a vertex where there exists a particle at that moment, the newborn vertex is removed immediately as if it was never born.

The existence of such a process is guaranteed by a modification of the classical graphical representation for CP. This CP model differs from the one defined in \cite{Pemantle-Stacey}. In homogeneous graphs (such as $\mathbb{Z}^d$ or $\mathbb{T}^d$) the two definitions of CP coincide. The only difference is that in our model we require the sum of birth rates among all directed edges going out of the same vertex be a fixed quantity $\lambda$, whereas in \cite{Pemantle-Stacey} the birth rate for each directed edge is $\lambda$, so when the underlying graph is not regular, in \cite{Pemantle-Stacey} an occupied vertex with higher degree has higher reproduction rate compared with those with lower degrees.  It is important to note that duality no longer holds in our model, because a directed edge $v_1v_2$ might have different birth rate than that of $v_2v_1$,

In particular, it is easily seen from the graphical representation that the CP is stochastically monotone in $\lambda$. We can couple contact processes simultaneously for all $\lambda>0$ on the same graph $G$.
We use $\mathbb{P}^{\lambda}_{G}$ to denote law of CP on $G$ with reproduction rate $\lambda$. Because of  monotonicity we can define
\[\lambda_g(G)=\inf \{\lambda:\mathbb{P}^{\lambda}_{G}(\forall t>0, \exists \text{ particle alive at time }t)>0\},\]
\[\lambda_{\ell}(G)=\inf \{\lambda:\mathbb{P}^{\lambda}_{G}(\forall T>0, \exists t>T,  \text{ s.t. } \exists \text{ particle at } \varrho \text{ at time }t)>0\}.\]

We say CP on $G$ has a \textbf{weak survival} phase if $\lambda_g(G)<\lambda_{\ell}(G)$.

\section{Discrete-time BRW}\label{sec3.connection}

Assume the BRW has particle reproduction law
$F_R=\{f_k\}_{k\geq0}$ with mean $\mu$. Let $(SRW_n)_{n\geq 0}$ denote the SRW started from $\varrho$, recall that the spectral
radius of SRW on a connected graph $G_{\omega}$ is given by
\[r(G)=\limsup_{n\rightarrow
\infty}\mathbb{P}_{G}(SRW_n=\varrho)^{1/n}.\] The spectral radius $r(G)$
does not depend on the choice of root $\varrho$.

In our terms, one of the main results (Theorem 3.7) of  \cite{Muller} is
\begin{Theorem}\label{2.mueller}
BRW is at the strong survival phase if and only if $\mu r(G)>1$.
\end{Theorem}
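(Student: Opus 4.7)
The plan is to pass through the first-moment identity relating BRW occupation at $\varrho$ to return probabilities of SRW. Conditioning on the genealogy (a Galton--Watson tree with offspring mean $\mu$) and using the conditional independence of particle motions given that genealogy, the expected total population at generation $n$ is $\mu^n$ while each particle is at $\varrho$ with probability $p^{(n)}(\varrho,\varrho) := \mathbb{P}_G(SRW_n = \varrho)$. Hence
\[
\mathbb{E}_G[N_n(\varrho)] \;=\; \mu^n\,p^{(n)}(\varrho,\varrho),
\]
so $\limsup_n \mathbb{E}_G[N_n(\varrho)]^{1/n} = \mu\,r(G)$.

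For the necessary direction, suppose $\mu\,r(G) \leq 1$. If $\mu\,r(G) < 1$, then $\mathbb{E}_G[N_n(\varrho)]$ decays geometrically, so Markov and Borel--Cantelli force $N_n(\varrho) = 0$ eventually, a.s., ruling out strong survival. The boundary case $\mu\,r(G) = 1$ requires a finer estimate exploiting the subexponential correction $p^{(n)}(\varrho,\varrho) \leq C n^{-3/2} r(G)^n$ available on nonamenable graphs (in particular on GW trees), which makes $\sum_n \mathbb{E}_G[N_n(\varrho)]$ convergent and again allows a Borel--Cantelli conclusion.

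For the sufficient direction, the main obstacle, assume $\mu\,r(G) > 1$. A direct Paley--Zygmund on $N_n(\varrho)$ fails because sibling pairs with recent common ancestors inflate the second moment. The standard workaround is a double truncation. First, replace $F_R$ by a bounded reproduction law $F_R^{(K)}$ with mean $\mu_K \uparrow \mu$; second, restrict SRW paths to a finite connected subgraph $H \subset G$ containing $\varrho$, chosen so that the Perron eigenvalue $\rho_H$ of the confined (i.e.\ killed-on-leaving-$H$) SRW kernel satisfies $\mu_K \rho_H > 1$. Both truncations are feasible for large $K$ and suitable $H$ thanks to the spectral approximation $\rho_H \uparrow r(G)$ as $H \uparrow G$. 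Let $\tilde N_n(\varrho) \leq N_n(\varrho)$ count particles for the doubly truncated BRW whose entire ancestral path lies in $H$. Perron--Frobenius on the finite confined kernel gives $\mathbb{E}[\tilde N_n(\varrho)] \asymp (\mu_K \rho_H)^n$, and Perron--Frobenius on its tensor square gives $\mathbb{E}[\tilde N_n(\varrho)^2] \leq C(\mu_K\rho_H)^{2n}$. Paley--Zygmund then yields $\mathbb{P}_G(\tilde N_n(\varrho) \geq 1) \geq c > 0$ along a subsequence, which upgrades to strong survival by a standard $0$--$1$ argument on the GW genealogy, and stochastic domination recovers the same for the original BRW.

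The hardest ingredient is the spectral approximation $\rho_H \uparrow r(G)$: this is the substantive input from the theory of random walks on infinite graphs, and it is what converts the hypothesis $\mu\,r(G) > 1$ into a genuinely supercritical subprocess on a finite subgraph. Once a suitable $H$ is in hand, the moment estimates and probabilistic conclusions follow essentially mechanically.
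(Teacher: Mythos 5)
The paper does not prove this statement at all: it is quoted verbatim from M\"uller (Theorem 3.7 there), so the only comparison available is with the argument in that reference, which runs through first-return generating functions and an embedded Galton--Watson process of returns to $\varrho$, not through moment bounds. Your first-moment identity $\mathbb{E}_G[N_n(\varrho)]=\mu^n p^{(n)}(\varrho,\varrho)$ is correct, and your sufficiency argument (approximate $r(G)$ from below by the Perron root $\rho_H$ of the SRW kernel confined to a finite connected $H$, so that $\mu_K\rho_H>1$) is a legitimate route; indeed the monotone convergence $\sup_H r(Q_H)=r(G)$ is exactly the fact the paper itself uses in Proposition \ref{3.d-string}. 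Once you have the finite subgraph $H$ you do not even need Paley--Zygmund: the confined, truncated process is a supercritical finite-type branching process with irreducible mean matrix $\mu_K Q_H$, which on its survival event occupies every vertex of $H$ infinitely often.

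The genuine gap is the critical case $\mu\,r(G)=1$, which is precisely the hard half of the cited theorem. Your argument rests on the bound $p^{(n)}(\varrho,\varrho)\leq C n^{-3/2}r(G)^n$, which is \emph{not} a general fact: such local limit asymptotics are known for regular trees, free products and word-hyperbolic groups, but not for arbitrary nonamenable graphs, and not off the shelf for Galton--Watson trees (the theorem as stated applies to any infinite, connected, locally finite $G$). Without that input, all you get from supermultiplicativity is $p^{(n)}(\varrho,\varrho)\leq r(G)^n$, hence $\mathbb{E}_G[N_n(\varrho)]\leq 1$, which is useless for Borel--Cantelli. The standard way to close this is structural rather than analytic: the particles whose ancestral line returns to $\varrho$ for the first time at generation $n$ form an embedded Galton--Watson process with mean $\sum_n \mu^n f^{(n)}(\varrho,\varrho)=F(\varrho,\varrho|\mu)$, where $F$ is the first-return generating function; the classical identity $G(\varrho,\varrho|z)=(1-F(\varrho,\varrho|z))^{-1}$ together with the fact that the Green function has radius of convergence $1/r(G)$ gives $F(\varrho,\varrho|1/r(G))\leq 1$, so at $\mu=1/r(G)$ the embedded process is at most critical and dies out, ruling out local survival. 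As written, your proof establishes the theorem only with the boundary case excluded.
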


Therefore to determine whether BRW might survive locally on $G$ it suffices to compute the spectral
radius $r(G)$.

What property of graph $G$ makes its spectral radius $r(G)=1$?
One sufficient condition is the existence of arbitrarily long linear
chains -- which we will call $L$-\emph{chains} -- in the graph
$G$. An $L$-chain is defined to be a chain of vertices $\{v_{i}
\}_{0\leq i\leq L}$ such that each $v_{i}$ is a neighbor of $v_{i+1}$,
and such that all of the interior vertices $\{v_{i} \}_{1\leq i\leq
L-1}$ have degree $2$ (so their only neighbors in $G$ are
$v_{i-1}$ and $v_{i+1}$). The parameter $L$ will be called the
\emph{length} of the $L$-$chain$.

\begin{Proposition}\label{3.string}
If $G$ contains arbitrarily long $L$-chains  then
$r(G)=1$.
\end{Proposition}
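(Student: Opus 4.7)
My plan is to prove the nontrivial inequality $r(G)\ge 1$ via a Rayleigh-quotient test function; the reverse bound $r(G)\le 1$ is immediate from $p_n(\varrho,\varrho)\le 1$. The key tool is that simple random walk on $G$ is reversible with stationary weights $\pi(v)=\deg(v)$, so its transition operator $P$ is self-adjoint on $\ell^2(V,\pi)$. By the classical spectral-radius formula for reversible Markov chains (see, e.g., Woess, \emph{Random Walks on Infinite Graphs and Groups}, Ch.~10),
\[
r(G)=\|P\|_{\ell^2(\pi)}\;\ge\;\sup_{0\neq f\in\ell^2(\pi)}\frac{\langle Pf,f\rangle_\pi}{\langle f,f\rangle_\pi}.
\]
So it suffices to exhibit a family of test functions whose Rayleigh quotients tend to $1$.

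For each $L$, let $v_0,v_1,\dots,v_L$ be an $L$-chain in $G$ and take $f_L$ to be the indicator of the interior vertices $\{v_1,\dots,v_{L-1}\}$. Each such vertex has degree $2$, so $\|f_L\|_\pi^2=2(L-1)$. For every interior $v_i$ one has $(Pf_L)(v_i)=\tfrac12\bigl(f_L(v_{i-1})+f_L(v_{i+1})\bigr)$, which equals $1$ when $2\le i\le L-2$ and $\tfrac12$ when $i\in\{1,L-1\}$; hence $\langle Pf_L,f_L\rangle_\pi=2(L-2)$. The Rayleigh quotient is therefore $(L-2)/(L-1)$, so sending $L\to\infty$ along the given sequence of $L$-chains forces $r(G)\ge 1$, and combined with the trivial upper bound we conclude $r(G)=1$.

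The only real friction I anticipate would come from trying to argue directly from the return-probability definition $r(G)=\limsup_n p_n(\varrho,\varrho)^{1/n}$ at a fixed root. The natural combinatorial bound $p_n(v,v)\ge p_n^{\mathbb{Z}}(0,0)$ for $v$ the midpoint of an $L$-chain holds only for $n\lesssim L/2$; worse, transferring it back to $\varrho$ requires routing the walk through at least $L/2$ intermediate vertices, some of which may have degree exceeding $2$, contributing a multiplicative penalty that shrinks exponentially in $L$ and wipes out the polynomial gain $c/\sqrt{n}$. The operator-norm characterisation above sidesteps this issue entirely, which is why I would use it.
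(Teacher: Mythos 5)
Your proof is correct. The identification $r(G)=\|P\|_{\ell^2(\pi)}$ for the reversible operator $P$ is the standard fact you cite, the test function is legitimately in $\ell^2(\pi)$ (finitely supported), and the computation $\|f_L\|_\pi^2=2(L-1)$, $\langle Pf_L,f_L\rangle_\pi=2(L-2)$ is right for $L\ge 3$, which suffices since $L\to\infty$. The paper itself gives no argument here: it simply cites Lemma 3.6 of Benjamini--M\"uller and Theorem 3.11 of M\"uller, and the mechanism behind those citations (also visible in the paper's own Proposition \ref{3.d-string}, of which this statement is the $d=1$ case) is the monotonicity of the spectral radius of the substochastic restrictions $Q_F$ over finite subsets $F$, combined with the known value $r(\mathbb{Z})=1$ (equivalently $2\sqrt{d}/(d+1)$ at $d=1$). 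Your Rayleigh-quotient computation reaches the same conclusion without importing either the restriction-monotonicity lemma or the spectral radius of SRW on $\mathbb{Z}$, so it is more self-contained and elementary; what the paper's route buys is uniformity, since the same restriction argument handles $(d,L)$-subtrees for all $d$ at once, whereas your explicit indicator test function exploits the degree-$2$ structure specific to chains. Your closing remark about why the fixed-root return-probability approach is awkward is also well taken.
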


\begin{proof}
This follows from proof of Lemma 3.6 in \cite{BenMue}, or Theorem 3.11 in \cite{Muller}.
\end{proof}

We can generalize the idea of $L$-chain to a finite $d$-ary ($d\geq
1$) tree of height $L$. Formally, we define a
$\mathbf{(d,L)}$\textbf{-subtree} in a graph $G$ to be a rooted
$d$-ary tree $T$ of depth $L$ embedded in $G$ in such a way that,
except for the root and the leaves (leaves are vertices at maximum
depth $L$), every vertex of $T$ has no neighbors in $G$ other
than those $d+1$ neighbors it has in the tree $T$. Observe that a
$(1,L)$-subtree is just an $L$-chain.

The relevance of $(d,L)$-subtrees to spectral radii is similar as for
$L$-chains.  Once a SRW gets into a $(d,L)$-subtree,
its depth (as viewed from the root of the $(d,L)$-subtree) behaves as a
$p$ - $q$ nearest neighbor random walk on $[0,L]$, with $p=1/(d+1)$
and $q=1-p$.

\begin{Proposition}\label{3.d-string}
If for some $d\geq 1$, $G$ contains $(d,L)$-subtrees of
arbitrary depth $L$ then
$r(G )\geq 2\sqrt{d}/(d+1)$.
\end{Proposition}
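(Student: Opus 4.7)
The plan is to lower bound $r(G)$ by restricting SRW on $G$ to walks that remain inside a $(d,L)$-subtree and comparing the resulting sub-stochastic kernel with SRW on the infinite $(d{+}1)$-regular tree $\mathbb{T}_{d+1}$, whose spectral radius is classically known to equal $2\sqrt{d}/(d+1)$. The key structural observation is that within a $(d,L)$-subtree the SRW transition kernel on $G$ coincides with that on an abstract infinite $d$-ary tree.

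For each $L$, fix a $(d,L)$-subtree $T_L \subseteq G$ with root $r_L$, pick a depth-$1$ vertex $v_L \in T_L$, and let $S_L$ be $v_L$ together with all of its descendants in $T_L$ of depth $\leq L-1$. Since every vertex in $S_L$ is an interior vertex of $T_L$ and therefore has $G$-degree $d+1$ with all neighbors in $T_L$, the SRW kernel on $G$ restricted to $S_L$ (killed on exit) is $K_L(x,y) = 1/(d+1)$ for $y \in S_L$ adjacent to $x$ in $G$, and $K_L(x,y) = 0$ otherwise. In particular $p_n^G(v_L, v_L) \geq K_L^n(v_L, v_L)$ for every $n$. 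Regardless of how $T_L$ sits inside $G$, the weighted graph $(S_L, K_L)$ is isomorphic to the ball of radius $L-2$ around a root $v$ in a universal sub-stochastic kernel $K_\infty$ on the infinite rooted $d$-ary tree --- namely, SRW on $\mathbb{T}_{d+1}$ killed on its first step from $v$ toward a marked ``ghost'' neighbor. Because a walk of length $n$ on $K_\infty$ starting at $v$ stays within distance $n$ of $v$, one has $K_L^n(v_L, v_L) = K_\infty^n(v, v) =: q_n$ for all $n \leq L-2$.

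The spectral radius of $K_\infty$ is computed by a standard first-return argument: by self-similarity of $\mathbb{T}_{d+1}$, the first-hit generating function $H(z)$ from any vertex to its parent satisfies
\[
H(z) = \frac{z}{d+1} + \frac{dz}{d+1}\, H(z)^2,
\]
giving $H(z) = [d+1 - \sqrt{(d+1)^2 - 4dz^2}\,]/(2dz)$ with radius of convergence $(d+1)/(2\sqrt{d})$; a short check shows the return generating function at $v$ inherits this radius, so $\limsup_n q_n^{1/n} = 2\sqrt{d}/(d+1)$. To finish, I use that $p_n^G(v,v)$ is super-multiplicative ($p_{n+m}^G(v,v) \geq p_n^G(v,v) p_m^G(v,v)$ by Chapman--Kolmogorov), so by Fekete's lemma $r(G) = \sup_n p_n^G(v,v)^{1/n}$ for any vertex $v$. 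For each even $n$, choosing $L = n+2$ and the corresponding $v_L$,
\[
r(G) \geq p_n^G(v_L, v_L)^{1/n} \geq K_L^n(v_L, v_L)^{1/n} = q_n^{1/n},
\]
and taking $\limsup$ over $n$ yields $r(G) \geq 2\sqrt{d}/(d+1)$. The main technical ingredient is the Green's function computation of $\limsup_n q_n^{1/n}$; super-multiplicativity conveniently bypasses any need for a spectral-approximation argument $\rho(K_L) \to \rho(K_\infty)$, which would otherwise be the main obstacle because the subtrees $T_L$ need not be nested inside $G$.
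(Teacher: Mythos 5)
Your proof is correct, and while it rests on the same core idea as the paper's --- the SRW restricted to the interior of a $(d,L)$-subtree is indistinguishable from SRW on the $(d+1)$-regular tree restricted to a finite piece --- the technical implementation is genuinely different and more self-contained. The paper's proof invokes two cited facts: the monotonicity $r(Q_F)\leq r(Q_{F'})$ of spectral radii of substochastic restrictions to nested finite sets, and the value $r(\mathbb{T}^d)=2\sqrt{d}/(d+1)$ from Woess; it then asserts $r(G)\geq\sup_L r(Q_{F_L})=r(\mathbb{T}^d)$. You replace the operator-monotonicity lemma with direct path counting ($p_n^G(v_L,v_L)\geq K_L^n(v_L,v_L)=q_n$ for $n\leq L-2$) combined with supermultiplicativity of return probabilities and Fekete's lemma, and you replace the citation of the spectral radius of the regular tree with a first-return generating-function computation. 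Your closing remark is well taken: the paper's one-line identity $\sup_L r(Q_{F_L})=r(\mathbb{T}^d)$ implicitly requires identifying each $Q_{F_L}$ with a restriction of the kernel on $\mathbb{T}^d$, where the restrictions \emph{can} be taken nested even though the subtrees $T_L$ need not be nested in $G$; your Fekete argument makes the comparison explicit at the level of individual return probabilities and avoids the issue entirely. Two routine points deserve a line each in a final write-up: the identity $r(G)=\sup_n p_n^G(v,v)^{1/n}$ should be restricted to the even subsequence (return probabilities at odd times vanish on trees, and Fekete applies to $\log p_{2n}^G(v,v)$), and the ``short check'' that the return series at $v$ inherits the singularity of $H$ amounts to verifying that the first-return series $F(z)=\tfrac{dz}{d+1}H(z)$ satisfies $F(z^*)=1/2<1$ at $z^*=(d+1)/(2\sqrt{d})$, so that $1/(1-F(z))$ has radius of convergence exactly $z^*$.
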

\begin{proof}
Let $Q=(Q(x,y))_{x,y\,\in V}$ be the probability transition matrix of the SRW on $G=(V,\mathcal{E})$. 
For any finite subset $F$ of $V$, denote by $Q_F$ the substochastic matrix $(Q(x,y))_{ x,y\, \in F}$ and by $r(Q_F)$ its spectral radius, then it is well known (see \cite{Benjamini-Peres} or \cite{Muller}) that if $Q$ is irreducible then $F\subset F^\prime$ implies $r(Q_F)\leq r(Q_{F^\prime})$.

Then it is easy to see that $r(G)\geq \sup_L r((d,L)\text{-subtree})=r(\mathbb{T}^d)=2\sqrt{d}/(d+1)$, where the last equality follows from Lemma 1.24 in \cite{Woess:book} and $\mathbb{T}^d$ is the regular tree with degree $d+1$.
\end{proof}

For a GW tree with offspring distribution $F_T=\{p_k\}_{k\geq 1}$ assume that  $p_d>0$
for some $d\geq 1$. It is easy to see that GW-a.e.
$G_{\omega}$ contains a $(d,L)$-subtree for every $L\in
\mathbb{N}$, because when we sequentially explore the GW tree, a vertex having a $(d,L)$-subtree attached to it in the next $L$ levels is an event with positive probability, while there are infinitely many trials and therefore eventually there will be a success.

\begin{Proposition}\label{3.gw-0}
If $p_d>0$, then GW-a.e. $G_{\omega}$
has a $(d,L)$-subtree for every $L\in\mathbb{N}$.
\end{Proposition}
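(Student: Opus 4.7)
The plan is to fix $L\in\mathbb{N}$, show that GW-a.s.\ $G_\omega$ contains a $(d,L)$-subtree, and then take a countable intersection over $L$ to conclude the statement.

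First I would bound below the probability that a single vertex $v$ of $G_\omega$ is the root of some $(d,L)$-subtree. A sufficient event is that $v$ has exactly $d$ children in $G_\omega$ and that every descendant of $v$ at depth $1,2,\ldots,L-1$ also has exactly $d$ children: under this event every interior vertex of the resulting embedded $d$-ary tree has GW-degree exactly $d+1$, as the definition of $(d,L)$-subtree requires, while the root $v$ and the leaves at depth $L$ may have arbitrary extra neighbors. This event depends only on the subtree rooted at $v$ truncated at depth $L$, and since it requires $1+d+d^2+\cdots+d^{L-1}$ independent offspring-counts each to take the value $d$, its probability is at least $c_L := p_d^{(d^L-1)/(d-1)}>0$ (with the exponent replaced by $L$ when $d=1$), uniformly in $v$.

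Next I would arrange infinitely many independent trials. Since $p_0=0$, every vertex of $G_\omega$ has at least one child, so I may fix a canonical infinite ray $\varrho=v_0,v_1,v_2,\ldots$ by taking, say, the lowest-labeled child at each step, and set $u_j := v_{j(L+1)}$. Let $A_j$ be the event that $u_j$ is the root of a $(d,L)$-subtree. Then $A_j$ is determined by the depth-$L$ truncation of the subtree rooted at $u_j$, whose vertices sit in the GW-level band $[j(L+1),\,j(L+1)+L]$, and crucially this truncation is disjoint from the full subtree rooted at $u_{j+1}$, which begins only at level $(j+1)(L+1)=j(L+1)+L+1$. The branching property of the GW tree --- which makes the subtree rooted at any fixed vertex an independent copy of the GW tree given everything outside it --- then yields, by induction on $j$, that $(A_j)_{j\geq 0}$ are jointly independent, each with $\mathbb{P}(A_j)\geq c_L>0$. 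The second Borel-Cantelli lemma forces $A_j$ to occur for infinitely many $j$ almost surely, so in particular $G_\omega$ contains a $(d,L)$-subtree a.s.; a countable intersection over $L\in\mathbb{N}$ finishes the proof.

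The main obstacle --- and the reason for spacing the ray markers $L+1$ apart rather than $L$ --- is that the $u_j$ are ancestor-related along a common ray, so one must carefully isolate the data defining $A_j$ from the subtree supporting $A_{j+1}$. Once the spacing is chosen as above, the required independence is immediate from the standard branching-property decomposition of the GW tree below any fixed vertex, and everything else is a routine Borel-Cantelli argument.
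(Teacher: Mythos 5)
Your proof is correct and is essentially a rigorous implementation of the paper's own (informal) justification: the paper likewise argues that each vertex roots a $(d,L)$-subtree with positive probability and that infinitely many independent trials guarantee a success, while you supply the missing details (the explicit lower bound $p_d^{(d^L-1)/(d-1)}$, the spacing of trial vertices along a ray to secure independence via the branching property, and Borel--Cantelli). No gaps; the argument matches the paper's approach.
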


Recall that the minimal offspring number $h_{\min}$ is the smallest
integer $i$ such that $p_i>0$, and by our assumption $h_{\min}\geq 1$.

\begin{Proposition}\label{3.gw}
(i) If $h_{\min}=1$, then for GW-a.e. $G_{\omega}$, $r(G_{\omega})=1$.

(ii) If $h_{\min}>1$, then for GW-a.e. $G_{\omega}$, $r(G_{\omega})=
2\sqrt{h_{\min}}/(h_{\min}+1)$.
\end{Proposition}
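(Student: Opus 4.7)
For Part (i), the result follows from machinery already in place. Since $h_{\min}=1$ means $p_1>0$, Proposition \ref{3.gw-0} applied with $d=1$ shows that GW-a.e.\ $G_\omega$ contains $(1,L)$-subtrees --- that is, arbitrarily long $L$-chains --- for every $L\in\mathbb{N}$. Proposition \ref{3.string} then yields $r(G_\omega)=1$; the reverse inequality $r(G_\omega)\leq 1$ is automatic from the definition of $r$ via return probabilities.

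For Part (ii), the lower bound is also immediate: Proposition \ref{3.gw-0} applied with $d=h_{\min}$ (valid since $p_{h_{\min}}>0$) produces $(h_{\min},L)$-subtrees for every $L$ a.s., and Proposition \ref{3.d-string} gives $r(G_\omega)\geq 2\sqrt{h_{\min}}/(h_{\min}+1)$.

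The real work is the matching upper bound $r(G_\omega)\leq 2\sqrt{h_{\min}}/(h_{\min}+1)$. My plan is to couple the SRW $(X_n)$ on $G_\omega$ with a biased nearest-neighbor random walk $(Y_n)$ on $\mathbb{Z}$ having step distribution $\mathbb{P}(Y_1=+1)=p:=h_{\min}/(h_{\min}+1)$ and $\mathbb{P}(Y_1=-1)=q:=1/(h_{\min}+1)$, started at $Y_0=0$, in such a way that $d(X_n,\varrho)\geq Y_n$ for all $n$. Drive both processes by a common i.i.d.\ sequence $(U_n)$ of uniforms on $[0,1]$: at each step $Y$ moves down iff $U_n<q$, while $X_n$ at a non-root vertex of degree $k+1$ moves toward $\varrho$ iff $U_n<1/(k+1)$ (with an independent auxiliary uniform used to pick which child to visit, should $X$ move away from $\varrho$). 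Since $p_0=0$ and $h_{\min}\geq 2$, every non-root vertex of $G_\omega$ has degree at least $h_{\min}+1$, so $1/(k+1)\leq q$ uniformly in $k$. A short case analysis (including the root step, in which $X$ must move to a child and therefore its distance increases regardless of $U_n$) shows that the gap $d(X_n,\varrho)-Y_n$ is non-decreasing. Consequently $\{X_n=\varrho\}\subseteq\{Y_n\leq 0\}$, so $p_n(\varrho,\varrho)\leq\mathbb{P}(Y_n\leq 0)$, and a standard Cram\'er/Chernoff computation gives $\limsup_n \mathbb{P}(Y_n\leq 0)^{1/n}=2\sqrt{pq}=2\sqrt{h_{\min}}/(h_{\min}+1)$, which is the desired bound on $r(G_\omega)$.

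The only point requiring care is that the number of children varies from vertex to vertex in the random graph $G_\omega$; but the quantile coupling above reduces the whole comparison to the single uniform inequality $1/(k+1)\leq q$, which holds at every vertex. Everything else reduces to well-known asymptotics for biased nearest-neighbor random walks on $\mathbb{Z}$.
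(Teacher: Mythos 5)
Your proof of part (i) and of the lower bound in part (ii) is exactly the paper's: combine Proposition \ref{3.gw-0} with Propositions \ref{3.string} and \ref{3.d-string} respectively. The difference is in the upper bound $r(G_\omega)\leq 2\sqrt{h_{\min}}/(h_{\min}+1)$ for part (ii): the paper simply cites exercise 11.3 in \cite{Woess:book}, whereas you supply a self-contained argument. Your coupling is correct. Since $p_0=0$ and $h_{\min}\geq 2$, every non-root vertex has at least $h_{\min}$ children and hence degree at least $h_{\min}+1$, so the probability of stepping toward $\varrho$ is at most $q=1/(h_{\min}+1)$ at every vertex of every realization; the quantile coupling then makes $d(X_n,\varrho)-Y_n$ non-decreasing (the only way the gap could shrink is ``$X$ down, $Y$ up,'' which is ruled out by $1/(k+1)\leq q$, and the root step only helps). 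This gives $\mathbb{P}(SRW_n=\varrho)\leq\mathbb{P}(Y_n\leq 0)$ for each $n$, and the Chernoff rate $\inf_\theta(pe^\theta+qe^{-\theta})=2\sqrt{pq}$ yields the claimed bound on the $\limsup$. What your route buys is transparency and self-containedness — it replaces an external exercise with an elementary comparison to a biased walk on $\mathbb{Z}$, and it makes visible that only the minimal degree of the tree matters; what it costs is a page of bookkeeping that the citation avoids. One presentational remark: the limit $\limsup_n\mathbb{P}(Y_n\leq 0)^{1/n}=2\sqrt{pq}$ is only needed as an upper bound here, so the one-line Markov/Chernoff inequality $\mathbb{P}(Y_n\leq 0)\leq(pe^\theta+qe^{-\theta})^n$ at the optimal $\theta$ suffices; the matching lower bound (and hence the word ``Cram\'er'') is superfluous for your purpose.
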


\begin{proof}
(i) If $h_{\min}=1$, combine Propositions \ref{3.string} and \ref{3.gw-0}.

(ii) By Propositions \ref{3.d-string} and \ref{3.gw-0}, we have
$r(G_{\omega})\geq 2\sqrt{h_{\min}}/(h_{\min}+1)$ for GW-a.e. 
$G_{\omega}$. The reverse inequality follows from exercise 11.3 in \cite{Woess:book}.
\end{proof}
Combining Theorem \ref{2.mueller} and Proposition \ref{3.gw}, we obtain
\begin{Theorem}\label{3.main}
(1) If $h_{\min}=1$, then for GW-a.e. 
$G_{\omega}$, BRW on $G_{\omega}$ has no weak survival phase for
any particle reproduction law $F_R$.

(2) If $h_{\min}>1$, then either for GW-a.e. $G_{\omega}$, BRW on $G_{\omega}$
has a weak survival phase; or for GW-a.e. $G_{\omega}$ there is no weak survival phase. More precisely, there is a weak survival phase if and only if the particle reproduction distribution $F_R=\{f_k\}_{k\geq0}$ satisfies $1<\mu=\sum_k
kf_k\leq(h_{\min}+1)/2\sqrt{h_{\min}}$.
\end{Theorem}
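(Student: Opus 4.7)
The plan is to combine the three phase characterizations: weak survival occurs precisely when global survival has positive probability but local survival does not. Since these two criteria have already been characterized (global survival by the Galton--Watson offspring distribution $F_R$, local survival by Theorem \ref{2.mueller}), the theorem should reduce to a short computation followed by an appeal to Proposition \ref{3.gw}.

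First I would observe that the total population $\sum_{v\in V} N_n(v)$ evolves as a Galton--Watson process with offspring distribution $F_R$, irrespective of the underlying graph, since particle dispersal preserves the total count. Hence global survival has positive probability if and only if $\mu>1$. Next, Theorem \ref{2.mueller} gives local survival with positive probability if and only if $\mu\, r(G_\omega)>1$. Combining these, the BRW is in the weak survival phase if and only if $\mu>1$ and $\mu\, r(G_\omega)\leq 1$, i.e., $1<\mu\leq 1/r(G_\omega)$.

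For part (1), Proposition \ref{3.gw}(i) says $r(G_\omega)=1$ GW-a.s., so the condition $1<\mu\leq 1$ is vacuous and no weak survival phase exists for any $F_R$. For part (2), Proposition \ref{3.gw}(ii) gives $r(G_\omega)=2\sqrt{h_{\min}}/(h_{\min}+1)$ GW-a.s., so the condition reads $1<\mu\leq (h_{\min}+1)/(2\sqrt{h_{\min}})$. The AM--GM inequality shows $(h_{\min}+1)/(2\sqrt{h_{\min}})>1$ strictly whenever $h_{\min}>1$, so the interval is nonempty. Since $r(G_\omega)$ is GW-a.s.\ a deterministic constant and $\mu$ depends only on $F_R$, the inequality either holds GW-a.s.\ (weak survival phase exists) or fails GW-a.s.\ (it does not), yielding the claimed dichotomy.

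There is no substantial obstacle here: the argument amounts to unpacking the definitions and citing Theorem \ref{2.mueller} and Proposition \ref{3.gw}. The only point requiring a moment's care is the clean separation of the global-survival criterion, which depends only on $F_R$ because dispersal preserves total mass, from the local-survival criterion, which couples $F_R$ to the geometry of $G_\omega$ through the spectral radius.
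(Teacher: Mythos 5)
Your proposal is correct and follows exactly the route the paper takes: the paper's entire proof is the single line ``Combining Theorem \ref{2.mueller} and Proposition \ref{3.gw}, we obtain,'' together with the observation (made in Section \ref{sec2.notation}) that the total population is a Galton--Watson process with mean $\mu$, so global survival holds iff $\mu>1$. You have simply written out the combination explicitly, including the correct handling of the boundary case $\mu\, r(G_\omega)=1$ (no strong survival by Theorem \ref{2.mueller}) and the AM--GM check that the interval is nonempty when $h_{\min}>1$.
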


\textbf{Remark.} If $h_{\min}=0$, for GW-a.e. $G_{\omega}$, $r(G_{\omega})=1$ and thus there is no weak survival phase. To see this, one can use a similar argument as in the proof of Proposition \ref{3.d-string}.

\section{CP on GW trees}\label{sec4.CP}

In this section we will show that for certain augmented Galton-Watson (AGW) trees,  CP on AGW-a.e. $G_{\omega}$ exhibits weak survival phase. We will first study continuous-time BRW, then CP.

When regarding the question of (global/local) survival  of a continuous-time BRW, it is reduced to (global/local)  survival of a discrete-time BRW with geometric offspring distribution. For more details about this connection, see, for example, Section 2.2 in \cite{B-Z}. So for continuous-time BRW, its phase transition can be determined using results obtained in the last section.

Now let us focus on CP. The underlying graph we will consider are AGW trees (which means we always add an extra copy of the GW tree to the root $\varrho$). By considering AGW trees, it makes the root homogeneous with all other vertices. For example, an AGW tree with degenerated offspring distribution, $p_d=1$, is a regular tree with degree $d+1$ for each vertex; this is not true for the GW tree because the root only has $d$ neighbors. Several ergodic results are known for AGW trees, for example, \cite{Lyons-Pemantle-Peres}.  All results about BRW obtained in the last section still hold if we replace GW by AGW, because adding one copy of a GW tree to the root doesn't affect the computation of the spectral radius.

The first natural question is whether the critical values $\lambda_{\ell} (G_{\omega}), \lambda_g(G_{\omega})$ are AGW-a.s. constants? The following theorem answers this question affirmatively.

\begin{Theorem}\label{4.GW-constant}
AGW-a.s., $\lambda_{\ell}(G_{\omega})$ and $\lambda_g(G_{\omega})$ are constants.
\end{Theorem}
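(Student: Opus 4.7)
The plan is to combine the self-similarity of the AGW measure with an ergodicity / 0--1 law argument. I focus on $\lambda_\ell$; the argument for $\lambda_g$ is parallel.

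First I observe that $\lambda_\ell(G_\omega)$ depends only on the underlying unrooted graph: as already noted in Section~\ref{sec2.notation}, local survival is independent of the choice of root when the graph is connected. For $\lambda_g$ the same holds, since the CP started at any vertex $u$ can, with positive probability, route a particle to $\varrho$ in finite time before dying, so global survival from $u$ and from $\varrho$ have the same critical value.

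Next I exploit the characteristic self-similarity of the AGW law: for every vertex $v$ of an AGW tree $G_\omega$, the tree re-rooted at $v$ again has the AGW distribution. Indeed, the neighbors of $v$ are its $K_v \sim F_T$ children (each rooting an i.i.d.\ GW subtree) together with $v$'s former parent; viewed from $v$, that former parent is the root of the tree obtained by attaching to it its $K_\varrho \sim F_T$ other neighbors and their GW subtrees, which is itself a GW tree. So $v$ has $K_v + 1$ i.i.d.\ GW subtrees hanging off it, which is precisely the AGW construction.

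Combining these facts, $\lambda_\ell(G_\omega)$ is a root-invariant measurable function on the space of rooted AGW trees. Since AGW is stationary and ergodic under the simple random walk shift on rooted trees (this is standard for unimodular random trees whenever $F_T$ is nontrivial), every root-invariant measurable function is AGW-a.s.\ constant. An equivalent self-contained route is to fix $\lambda$, note that the event $A_\lambda := \{G_\omega : \lambda > \lambda_\ell(G_\omega)\}$ is re-rooting invariant, and then use the self-similar decomposition into i.i.d.\ GW subtrees hanging off arbitrarily deep vertices, together with a Kolmogorov 0--1 law, to get $\mathbb{P}_{\mathrm{AGW}}(A_\lambda) \in \{0,1\}$; varying $\lambda$ over the rationals concludes. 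The main obstacle in either approach is tail-measurability of $A_\lambda$: a priori it depends on the root neighborhood through the local birth rates $\lambda/\deg$, and while root-invariance lets us replace $A_\lambda$ by an equivalent condition on a sub-AGW-tree of arbitrary depth, the iterative descent requires a careful coupling — for example, showing that two AGW trees agreeing outside a finite ball must yield the same answer for $A_\lambda$.
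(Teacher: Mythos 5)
Your first route is essentially the paper's proof: the paper fixes $\lambda$, shows the global- and local-survival events are measurable by writing them as countable unions/intersections of finite-depth events, notes that these events are invariant under the random-walk re-rooting shift precisely because survival does not depend on the choice of root, and then invokes the ergodicity of the system (PathsInTrees, SRW$\times$AGW, $S$) from Lyons--Pemantle--Peres to get a $0$--$1$ law for each rational $\lambda$. The ``tail-measurability'' obstacle you flag at the end never arises on this route --- shift-invariance (i.e.\ root-invariance) plus plain measurability is all the ergodic theorem needs --- so the alternative Kolmogorov $0$--$1$ law detour, which is indeed problematic, should simply be dropped.
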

\begin{proof}
The proof uses the ergodic property of AGW trees. We will use CP($\lambda$) to denote the CP with infection rate $\lambda$. We explore the AGW tree from the root $\varrho$ level by level. Define $\mathcal {F}_n$ to be the $\sigma$-algebra such that $\mathcal {F}_n$ contains exactly the information of the AGW tree up to level $n$. Let $\mathcal {F}_{\infty}=\bigcup_{n\geq0} \mathcal {F}_0$.

We first show that the set
\[\mathcal{G}^\lambda=\{G_{\omega}: \text{ CP}(\lambda) \text{ survives globally with positive probability on }G_{\omega} \}\] is a measurable subset of $\mathcal {F}_{\infty}$. Let 
\[\mathcal{G}^{\lambda}_{\epsilon,N}=\{G_{\omega}: \mathbb{P}_{G_{\omega}}^{\lambda}(\text{there exists an infection trail which exits }B_{N-1}(\varrho))\geq \epsilon\}.\] This is clearly a measurable subset in $\mathcal {F}_N$.
$\mathcal{G}^{\lambda}=\bigcup_{\epsilon>0,\epsilon\in\mathbb{Q}}\bigcap_{N=1}^{\infty}\mathcal{G}^{\lambda}_{\epsilon,N}\in \mathcal {F}_{\infty}$.

Now we cite ergodic theory from \cite{Lyons-Pemantle-Peres}. In \cite{Lyons-Pemantle-Peres}, it is shown that the system (PathsInTrees, SRW$\times$AGW, $S$) (where $S$ is the shift map) is ergodic (for the definition, see \cite{Lyons-Pemantle-Peres}). It is easily seen that because global survival doesn't depend on the choice of the root $\varrho$,
$\{\text{all paths}\}\times \mathcal{G}^\lambda$ is an invariant subset of PathsInTrees under $S$. Therefore by ergodicity \[\text{SRW}\times\text{AGW} (\{\text{all paths} \}\times \mathcal{G}^\lambda)=0 \text{ or } 1,\]
which proves that under measure AGW, the set $\mathcal{G}^{\lambda}$ has measure either 0 or 1.

Similarly, we express
\[\mathcal{L}^\lambda=\{G_{\omega}: \text{ CP}(\lambda) \text{ survives locally with positive probability on }G_{\omega} \} \]
by
$\mathcal{L}^{\lambda}=\bigcup_{\epsilon>0,\epsilon\in\mathbb{Q}}\bigcap_{m=1}^{\infty}\bigcup_{N=m}^{\infty}\mathcal{L}^{\lambda}_{\epsilon,m,N}$, where
$\mathcal{L}^{\lambda}_{\epsilon,m,N}=\{G_{\omega}: \mathbb{P}_{G_{\omega}}^{\lambda}($there exists an infection trail which hits $\partial B_{m-1}(\varrho)$,
then hits $\varrho$ without exiting $B_N(\varrho)$)$\geq \epsilon\}\in \mathcal {F}_{N}$.

Then by the same argument as above, under the measure AGW, the set $\mathcal{L}^{\lambda}$ has measure either 0 or 1.
\end{proof}
Because of Theorem \ref{4.GW-constant}, from now on we will use $\lambda_{\ell}$ and $\lambda_g$ for the AGW-a.s. constants without indicating their dependences on $G_\omega$.

\begin{Theorem} \label{5.CP-GW}
If $h_{\min}\geq 4$, then the CP on $G_{\omega}$ has a weak survival phase for AGW-a.e. $G_{\omega}$.
\end{Theorem}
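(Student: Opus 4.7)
The plan is to establish $\lambda_g < \lambda_\ell$ AGW-a.s.\ by producing matching one-sided bounds on the two critical values and then checking that the bounds are separated once $h_{\min} \geq 4$. First I would bound $\lambda_\ell$ from below. BRW stochastically dominates CP (the natural coupling kills any BRW particle that lands on an already-occupied vertex), so local survival of CP forces local survival of the continuous-time BRW. The continuous-time BRW with birth rate $\lambda$ and unit death rate reduces, for survival questions, to a discrete-time BRW with geometric offspring distribution of mean $\lambda$ (as noted at the start of Section~\ref{sec4.CP}); Theorem~\ref{2.mueller} then gives local BRW survival exactly when $\lambda\,r(G_\omega)>1$. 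Combined with $r(G_\omega) = 2\sqrt{h_{\min}}/(h_{\min}+1)$ from Proposition~\ref{3.gw}(ii), this yields
\[
\lambda_\ell \;\geq\; \frac{h_{\min}+1}{2\sqrt{h_{\min}}} \qquad\text{AGW-a.s.}
\]

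Next I would bound $\lambda_g$ from above. Since every vertex of the AGW tree has at least $h_{\min}$ children, I would attempt to extract an infinite $h_{\min}$-ary subtree $T' \subseteq G_\omega$ in which, moreover, each vertex has exactly $h_{\min}+1$ neighbors in the ambient $G_\omega$ itself; a Galton--Watson-within-Galton--Watson (or multi-level coarse-grained) argument, using $p_{h_{\min}}>0$, delivers such a $T'$ AGW-a.s. Restricting the CP to the edges of $T'$ is a lawful stochastic thinning, so global survival on $T'$ implies global survival on $G_\omega$; and because of the matched degrees the restricted process is nothing but the standard CP on the $(h_{\min}{+}1)$-regular tree with edge rate $\lambda/(h_{\min}{+}1)$. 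A supercritical branching-process comparison on this homogeneous process then yields an explicit bound $\lambda_g \leq C(h_{\min})$, and the theorem reduces to checking $C(h_{\min}) < (h_{\min}+1)/(2\sqrt{h_{\min}})$ for $h_{\min}\geq 4$.

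The hard part is producing a sharp enough $C(h_{\min})$. A naive one-generation branching-process comparison, which only asks whether each child of an infected vertex is infected during the parent's first lifetime, yields just $\lambda_g \leq (h_{\min}+1)/(h_{\min}-1)$, and this fails to drop below the BRW threshold $(h_{\min}+1)/(2\sqrt{h_{\min}})$ until roughly $h_{\min}\geq 6$. The crux of the proof is therefore a multi-generation (or self-consistent) branching-process comparison on the $(h_{\min}{+}1)$-regular tree that presses $C(h_{\min})$ strictly below $1/r(G_\omega)$ already at $h_{\min}=4$; this is where I expect the bulk of the technical work to lie. A secondary difficulty, handled by coarse-graining over blocks of depth $L$, is constructing the degree-regular $T'$ inside $G_\omega$ in the regime where $h_{\min}\,p_{h_{\min}}\leq 1$ so that a one-level embedding is itself subcritical.
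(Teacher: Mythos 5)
Your outline reproduces the paper's argument only in the easy regime. The lower bound $\lambda_\ell>(h_{\min}+1)/(2\sqrt{h_{\min}})$ via BRW domination is exactly Proposition~\ref{4.CP-lowerbound}, and the ``naive one-generation comparison'' giving $\lambda_g\leq(h_{\min}+1)/(h_{\min}-1)$ is part (i) of Proposition~\ref{4.CP-finerupperbound}; together these settle $h_{\min}\geq 6$. The genuine gap is your treatment of $h_{\min}=4,5$. You propose to keep the BRW lower bound fixed and push the upper bound on $\lambda_g$ below $(h_{\min}+1)/(2\sqrt{h_{\min}})$ (i.e.\ below $1.25$ for $h_{\min}=4$ and below $\approx 1.342$ for $h_{\min}=5$) by an unspecified ``multi-generation or self-consistent comparison.'' The paper's refined multi-level block construction (Proposition~\ref{4.CP-finerupperbound}(ii)) only achieves $\lambda_g\leq 1.46$ and $\lambda_g\leq 1.35$ respectively, which do \emph{not} fall below the BRW thresholds; there is no evidence that any branching-process comparison can. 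The missing idea is that one must also improve the \emph{lower} bound on $\lambda_\ell$ beyond what BRW domination gives: the paper does this in Proposition~\ref{4.CP-finerlowerbound} with a weight function $W(v)=r^k(1-b\theta_1(v))$ whose sum over the infected set is an exponentially decaying supermartingale (adapting Pemantle's method), yielding $\lambda_\ell\geq 1.50$ and $\lambda_\ell\geq 1.59$. Your proposal never contemplates strengthening that side of the inequality, and it places the entire burden on the side the paper could not make work.

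A secondary but real problem is your reduction of the upper bound to the CP on the $(h_{\min}{+}1)$-regular tree. An infinite subtree $T'$ all of whose vertices have \emph{ambient} degree exactly $h_{\min}+1$ exists only if the embedded Galton--Watson process of ``exactly $h_{\min}$ children'' vertices is supercritical, i.e.\ $h_{\min}p_{h_{\min}}>1$; when it is not, no amount of coarse-graining produces such a $T'$, because block decomposition cannot lower a vertex's actual degree. And the degree matters in the wrong direction: a vertex of degree $d_v>h_{\min}+1$ infects each fixed neighbor at rate $\lambda/d_v<\lambda/(h_{\min}+1)$, so restricting the CP to a subtree through high-degree vertices is \emph{dominated by}, not equal to, the regular-tree CP. The paper avoids this by running the comparison on the actual tree, carrying the random degrees $X_i$ through the computation and using $X_i\geq h_{\min}$ only where monotonicity makes that a valid lower bound. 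Your upper-bound strategy would need to be rebuilt along those lines even before confronting the main gap above.
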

The proof of this theorem involves bounding $\lambda_g$ from above and bounding $\lambda_{\ell}$ from below. Proposition \ref{4.CP-lowerbound} and (i) of Proposition \ref{4.CP-finerupperbound} yield an easy proof for the case $h_{\min}\geq 6$. For the case $h_{\min}=4,5,$ we will need the more refined results stated in (ii) of Proposition \ref{4.CP-finerupperbound} and Proposition \ref{4.CP-finerlowerbound}.

Recall that we have assumed $h_{\min}\geq 1$.
\begin{Proposition}\label{4.CP-lowerbound}
$\lambda_{\ell}>(h_{\min}+1)/(2\sqrt{h_{\min}})$.
\end{Proposition}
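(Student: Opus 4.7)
The plan is to combine stochastic domination of CP by the continuous-time BRW with the same parameter $\lambda$ (which gives a weak inequality) with a CP-specific refinement (which upgrades to strict). First I would couple CP$(\lambda)$ with BRW$(\lambda)$ via the graphical representation: the two processes share births and deaths, but CP deletes newborns that attempt to land on an already occupied vertex. This yields $N_t^{CP}(v)\leq N_t^{BRW}(v)$ pathwise for every $v$ and $t$, and in particular
\[
\mathbb{P}\bigl(N_t^{CP}(\varrho)\geq 1\bigr)\;\leq\;\mathbb{E}\bigl[N_t^{BRW}(\varrho)\bigr].
\]

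Second, solving the first-moment ODE $\partial_t m_t=(\lambda P-I)m_t$ for $m_t(x,y)=\mathbb{E}_x[N_t^{BRW}(y)]$ (with $P$ the SRW transition matrix on $G_\omega$) yields
\[
m_t(\varrho,\varrho)\;=\;e^{-t}\sum_{n\geq 0}\frac{(\lambda t)^n}{n!}P^n(\varrho,\varrho).
\]
Integrating in $t$ and swapping sum and integral by Tonelli gives
\[
\int_0^\infty \mathbb{P}\bigl(N_t^{CP}(\varrho)\geq 1\bigr)\,dt\;\leq\;\int_0^\infty m_t(\varrho,\varrho)\,dt\;=\;\sum_{n\geq 0}P^n(\varrho,\varrho)\,\lambda^n,
\]
the Green's function of SRW at argument $\lambda$. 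By Proposition~\ref{3.gw}, AGW-a.s.\ $r(G_\omega)=2\sqrt{h_{\min}}/(h_{\min}+1)$, so for $\lambda<1/r(G_\omega)$ the sum converges, the CP almost surely spends only finite time at $\varrho$, and no local survival can occur. This already yields $\lambda_{\ell}\geq(h_{\min}+1)/(2\sqrt{h_{\min}})$.

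The main obstacle is promoting this to a strict inequality: the sum $\sum_n P^n(\varrho,\varrho)\lambda^n$ diverges for every $\lambda>1/r(G_\omega)$, so pure BRW domination cannot extend the argument beyond the boundary. I would obtain strictness by exploiting CP's self-blocking mechanism. On the event of local survival, the long-run occupation density near $\varrho$ is bounded below, so births from a neighbour of $\varrho$ back into $\varrho$ (or more generally into a recently occupied ancestral vertex) are suppressed at a strictly positive rate, effectively replacing the BRW comparison by a branching process of strictly smaller effective mean $\lambda(1-\delta)$ for some $\delta>0$. A self-consistency argument---assuming local survival at $\lambda$ with $\lambda r(G_\omega)$ only slightly above $1$ and deriving a contradiction from the above effective rate---then forces $\lambda$ strictly past $1/r(G_\omega)$, yielding $\lambda_{\ell}>(h_{\min}+1)/(2\sqrt{h_{\min}})$. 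Making the suppression rate uniformly positive on AGW-a.e.\ realisations, despite the random geometry of the tree, is the technical heart of this step.
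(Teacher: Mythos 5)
Your first step is correct and is exactly the paper's route: dominate CP$(\lambda)$ by the continuous-time BRW$(\lambda)$, reduce local survival of the BRW to the spectral condition $\lambda\, r(G_\omega)>1$, and plug in $r(G_\omega)=2\sqrt{h_{\min}}/(h_{\min}+1)$ from Proposition~\ref{3.gw}. The only difference is that you re-derive the relevant direction of the BRW criterion from scratch via the first-moment ODE and the Green's function $\sum_n P^n(\varrho,\varrho)\lambda^n$, whereas the paper cites Theorem~\ref{2.mueller} (M\"uller) and \cite{B-Z}. Your derivation is sound (one small point worth spelling out: finite expected occupation time of $\varrho$ rules out local survival because each maximal occupation interval of $\varrho$ is an independent $\mathrm{Exp}(1)$, so infinitely many returns would force infinite total occupation time). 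This gives $\lambda_{\ell}\geq (h_{\min}+1)/(2\sqrt{h_{\min}})$, as you say. Note that citing M\"uller's ``if and only if'' would also let you include the boundary case $\lambda=1/r(G_\omega)$, but that still only shows the set of locally surviving $\lambda$ is contained in $(1/r,\infty)$, i.e.\ $\lambda_{\ell}\geq 1/r$.

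The gap is your second step. The ``self-blocking'' argument is a heuristic, not a proof: nothing in it quantifies the suppression $\delta$, and the claim that local survival forces a positive long-run occupation density near $\varrho$ (uniformly over AGW-a.e.\ realisations) is precisely the hard, unproved part, as you yourself concede. As written, it does not upgrade $\geq$ to $>$. You should be aware, however, that the paper's own proof has the same limitation: domination by a process that fails to survive locally for all $\lambda\leq 1/r$ only yields $\lambda_{\ell}\geq 1/r$; strictness would require exhibiting some $\lambda>1/r$ at which CP still dies out locally, which the domination argument cannot do since the BRW survives locally there. When the paper does need genuinely better lower bounds (Proposition~\ref{4.CP-finerlowerbound}), it switches to an entirely different technique, a weight-function supermartingale in the style of Pemantle, rather than anything resembling your self-consistency scheme. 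Fortunately, the weak inequality is all that Theorem~\ref{5.CP-GW} actually uses for $h_{\min}\geq 6$, since there $\lambda_g\leq (h_{\min}+1)/(h_{\min}-1)$ is strictly below $(h_{\min}+1)/(2\sqrt{h_{\min}})$. So I would either state and prove the proposition with $\geq$, or replace your second step with a quantitative argument of the supermartingale type.
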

\begin{proof}
The continuous-time BRW always dominates CP (with same $\lambda$). So if the continuous-time BRW does not survive locally, neither does CP. By \cite{B-Z}, the parameter $\lambda$ in continuous-time BRW serves as $\mu$ in the corresponding discrete-time BRW. From Theorem \ref{2.mueller} and Proposition \ref{3.gw} (and an easy argument that by switching to AGW tree the spectral radius is unchanged), $\lambda_{\ell}> 1/r(G_{\omega})=(h_{\min}+1)/(2\sqrt{h_{\min}})$ for AGW-a.e. $G_{\omega}$.
\end{proof}
Next we give an upper bound for $\lambda_g$ for AGW tree.
\begin{Proposition}\label{4.CP-finerupperbound}
Suppose $X$ is distributed as $F_T$. If $\lambda$ satisfies the following inequality ($\mathbb{E}_X$ means taking expectation w.r.t. $X$)
\[\mathbb{E}_X \left(\lambda X(\lambda+X+1)^{-1}\left(1-\frac{\lambda}{\lambda+X+1}\frac{1}{2+\lambda/(h_{\min}+1)}\right)^{-1}\right)>1,\]
then $\lambda_g\leq\lambda$.

Furthermore,

(i) if $h_{\min}\geq 2$, then $\lambda_g\leq (h_{\min}+1)/(d_{\min-1})$;

(ii) in particular, if $h_{\min}=4,$ then $\lambda_g\leq 1.46$; 
if $h_{\min}=5,$ then $\lambda_g\leq 1.35$.
\end{Proposition}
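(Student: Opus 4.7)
The plan is to upper bound $\lambda_g$ by constructing a Galton-Watson process that is dominated by the CP's infection-event structure, and showing that the stated inequality is exactly the criterion for supercriticality of this embedded process. Once this is set up, supercriticality implies survival of the Galton-Watson process with positive probability, which translates into infinitely many vertices of $G_\omega$ being infected in the CP, hence global survival, yielding $\lambda_g \le \lambda$.

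I would use the graphical representation of the CP and define the particles of the embedded process to be events of the form ``vertex $v$ is infected by its parent $u$''. The offspring of such a particle are the analogous events for the children of $v$: each time $v$ successfully transmits to a child $c$ --- possibly across several lifetimes of $v$, since $v$ may be re-infected by $u$ after $v$'s death --- a new offspring event is recorded. Because the underlying graph is a tree, a child $c$ of $v$ can only be reached through $v$, so the resulting collection of events forms a tree that embeds into a genuine Galton-Watson process.

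To calculate the offspring mean, I condition on $X_v = X$. A standard competing-exponentials argument shows that the probability $v$ transmits to a fixed child $c$ during a single infected lifetime of $v$ is $p = \lambda/(\lambda+X+1)$: the race between transmission at rate $\lambda/(X+1)$ and death at rate $1$. Handling re-infection involves a three-way race among the deaths of $v$ and $u$ (each at rate $1$) and the transmission from $u$ back to $v$ (at rate $\lambda/(X_u+1)$); the $h_{\min}$-based bound $\lambda/(X_u+1) \le \lambda/(h_{\min}+1)$ (valid since $X_u \ge h_{\min}$) yields the quantity $q = 1/(2+\lambda/(h_{\min}+1))$. A decomposition into successive re-infection cycles produces a geometric series summing to $p/(1-pq)$, the expected number of infection events at a fixed child $c$; multiplying by $X$ and taking expectations over $X \sim F_T$ gives the stated sufficient condition.

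For part (i), setting $\lambda = (h_{\min}+1)/(h_{\min}-1)$ and noting that $Xp = \lambda X/(\lambda+X+1)$ is increasing in $X$ and equals $1$ at $X = h_{\min}$, one has $Xp \ge 1$ throughout the support of $F_T$; combined with $(1-pq)^{-1}>1$, this yields $\mathbb{E}_X[Xp/(1-pq)] > 1$ strictly. For part (ii), I would verify the inequality numerically at $(\lambda, h_{\min}) = (1.46, 4)$ and $(1.35, 5)$, using monotonicity of $Xp/(1-pq)$ in $X$ to reduce the check to the worst case $X = h_{\min}$ in each case. The hardest step will be the bookkeeping for the embedded process: justifying rigorously that the collection of infection events extracted from the graphical representation admits a stochastic lower bound by a Galton-Watson process with offspring mean $\mathbb{E}_X[Xp/(1-pq)]$, in particular that the re-infection cycles yield the geometric-series factor $(1-pq)^{-1}$ and that the $h_{\min}$-based bound on $X_u$ is applied so as to preserve the direction of the inequality.
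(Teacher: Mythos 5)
Your formula and the numerical conclusions in (i) and (ii) match the paper's, and the broad strategy (an embedded supercritical branching structure, competing exponentials, a geometric series for repeated infection attempts, and the bound $X\geq h_{\min}$ to decouple consecutive levels, plus monotonicity in $X$ to reduce (i) and (ii) to the case $X=h_{\min}$) is the right one. However, there are two genuine gaps.

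First, the per-edge Galton--Watson embedding does not work as described; you correctly flag this as the hardest step, but it is not a bookkeeping issue that can be patched in place. The offspring counts of the events ``$v$ infects $c_1$'', ``$v$ infects $c_2$'', $\dots$ all depend on $v$'s lifetimes and on how often $v$ is re-infected by $u$, hence on $u$'s lifetime, which also drives the sibling events ``$u$ infects $v'$''. Sibling offspring counts are therefore dependent, and an offspring mean exceeding $1$ does not by itself give survival with positive probability. The paper's resolution is structural: it runs the comparison on blocks of depth $n+1$, takes generation $m$ of the comparison process to be the vertices at depth $m(n+1)$ reached by infection trails confined to their blocks, and needs only a \emph{first-moment} computation within a block (expression (\ref{4.expectation})); independence across generations is then automatic because distinct seeds at the bottom of a block head disjoint subtrees and use disjoint parts of the graphical representation. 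This is also why $n$ must be taken large: with depth-one blocks one only obtains the offspring mean $\mathbb{E}_X\bigl(\lambda X/(\lambda+X+1)\bigr)$ with no factor $(1-pq)^{-1}$; the retry factor is earned only at interior levels of a deep block, and the hypothesis $\mathbb{E}_X f_{h_{\min}}(X,\lambda)>1$ yields supercriticality only in the form $I^n\times II>1$ for $n$ large.

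Second, your derivation of $q=1/(2+\lambda/(h_{\min}+1))$ is not correct as stated. The three-way race that produces this factor is among (a) the death of the newly infected child $c$ (rate $1$), (b) the death of its infector $v$ (rate $1$), and (c) the onward transmission from $c$ to one of \emph{its own} children (rate $\lambda/(X_c+1)\leq\lambda/(h_{\min}+1)$ since $X_c\geq h_{\min}$); $q$ is the probability that (a) wins, i.e.\ that $c$ dies cleanly so that $v$, still alive, can try again. The race you describe --- deaths of $v$ and $u$ versus re-infection of $v$ by $u$ at rate $\lambda/(X_u+1)$ --- is a different event, and the probability that the transmission wins that race is $\frac{\lambda/(X_u+1)}{2+\lambda/(X_u+1)}$, not $\frac{1}{2+\lambda/(X_u+1)}$; carried out literally, your cycle decomposition would not reproduce the stated $f_{h_{\min}}$. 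Relatedly, the geometric series in the paper counts repeated cycles of ``parent infects child, child dies cleanly'' at a fixed edge, not repeated lifetimes of the transmitting vertex caused by re-infection from above.
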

\begin{proof}
The strategy is to construct a supercritical GW process which is dominated by CP. We will build a ``block" in the AGW tree , run the CP within this block, retain the particles at the bottom of the block and use each of them as ``seed" for the CP on the next block.

The root $\varrho$ has $1+X$ neighbors, among them 1 parent and $X$ children, where $X$ is distributed as $F_T$. Imagine the parent of $\varrho$ to be at level -1, $\varrho$ at level 0, and the $X$ children at level 1. For any descendant of $\varrho$, its level is defined to be its graph distance to $\varrho$.

We build the GW process $(|\xi_m|)_{m\geq 0}$ as follows, where $\xi_m$ is set-valued.  Fix a positive integer $n\geq 1$ and let $\xi_0=\{\varrho\}$ (and $|\xi_0|=1$). 

\textbf{Stage 1}: explore the next $n+1$ levels of the AGW tree, regard them as a block.

\textbf{Stage 2}: run CP on this $(n+1)$-level block. This means we do not allow $\rho$ to infect its parent. Keep in mind that the only initially infected vertex is $\varrho$. Those vertices at the bottom (the $(n+1)$-st level) that ever get infected are regarded as $\xi_1$. We ``freeze"  particles at the bottom level until all the other particles die out.

When all particles die out on this $(n+1)$-level block except for those ``frozen" ones at the bottom level, we repeat stage 1 and 2 using these infected vertices as roots. This gives a GW process $(|\xi_m|)_m$ which is dominated by the original CP (which means if $(|\xi_m|)_m$ survives, so does the original CP), because the infection trails in $(\xi_m)_m$ are completely contained in the original CP. Suppose we are able to show that $\mathbb{E}|\xi_1|$ is greater than $1$ for some $\lambda$, then it implies the CP survives globally with positive probability for this $\lambda$ and from Theorem \ref{4.GW-constant}, $\lambda_g\leq \lambda$.

Now consider a vertex $v_{n+1}$ at the $(n+1)$-th level of a block. Suppose the geodesic connecting $v_{n+1}$ and $\varrho=v_0$ is $v_0,v_1,\dots,v_{n},v_{n+1}$, and suppose $v_i$ has $X_i$ offsprings in $G_{\omega}$. At time 0 only $v_0$ is infected. Consider the following events.

(1) $v_i$ infects $v_{i+1}$, and then the particle at $v_{i+1}$ dies before either the particle at $v_i$ dies or $v_{i+1}$ infects $v_{i+2}$; call this event $A_i, 0\leq i\leq n-1$.

(2) $v_i$ infects $v_{i+1}$; call this event $B_i, 0\leq i \leq n$.

In order that $v_{n+1}$ gets infected, we could have the following events happen in order: $A_0$ happens $m_0$ times, and then $B_0$ happens once;  then $A_1$ happens $m_1$ times, and then $B_1$ happens once; ...; $A_{n-1}$ happens $m_{n-1}$ times, and then $B_{n-1}$ happens once; finally $B_n$ happens once and $v_{n+1}$ now gets infected. Denote the above sequences of events by an $n$-tuple $(m_0, m_1, \dots, m_{n-1} )$ where each component is a nonnegative integer. It is easy to see that different $n$-tuples correspond to disjoint events. Now let us compute the probability of observing a specific $n$-tuple $(m_0, m_1, \dots, m_{n-1})$. This means we first observe event $A_0$ happens $m_0$ times. The probability that $A_0$ happens is $q_0=\frac{\lambda/(X_0+1)}{\lambda/(X_0+1)+1}\times\frac{1}{1+1+\lambda/(X_1+1)}$. The first factor is because we need $v_0$ infects $v_1$ before the particle at $v_0$ dies; this means for 2 independent Poisson processes with rates $\lambda/(X_0+1)$ and 1, the one with rate $\lambda/(X_0+1)$ has to give the first occurrence before the other. The second factor is because we need the particle at $v_1$ dies before the particle at $v_0$ dies or $v_1$ infects $v_2$; this means a Poisson process with rate 1 has to give the first occurrence before the other 2 independent processes with rates 1 and $\lambda/(X_1+1)$. The probability of $B_0$ happens is $r_0=\frac{\lambda/(X_0+1)}{\lambda/(X_0+1)+1}$ which is already explained. Therefore the probability of observing the tuple $(m_0, m_1, \dots, m_{n-1} )$ is $q_0^{m_0}q_1^{m_1}\dots q_{n-1}^{m_{n-1}}r_0r_1\dots r_{n}$, where $q_i=\frac{\lambda/(X_i+1)}{\lambda/(X_i+1)+1}\times\frac{1}{1+1+\lambda/(X_{i+1}+1)}$, $r_i=\frac{\lambda/(X_i+1)}{\lambda/(X_i+1)+1}$.

So the probability that $v_{n+1}$ eventually gets infected, is at least
\[\begin{aligned}&\sum_{m_0\in\mathbb{N}}\dots\sum_{m_{n-1}\in\mathbb{N}}q_0^{m_0}q_1^{m_1}\dots q_{n-1}^{m_{n-1}}r_0r_1\dots r_{n}\\
=& \frac{1}{1-q_0}\frac{1}{1-q_1}\dots\frac{1}{1-q_{n-1}} r_0r_1\dots r_{n}.\\\end{aligned}\]
But $v_{n}$ has $X_n$ children at the $(n+1)$-st level, so the expected number (given $(X_i)_{0\leq i\leq n}$) of infected children of $v_{n}$ is at least
\[X_n\frac{1}{1-q_0}\frac{1}{1-q_1}\dots\frac{1}{1-q_{n-1}} r_0r_1\dots r_{n}.\]
If we keep counting infected descendants at the $(n+1)$-st level of $v_{n-1},v_{n-2},$ $\dots,v_0$, a simple induction argument shows that the expected  total number of infected vertices at the (n+1)-st level is given by
\begin{equation}\label{4.expectation}
\mathbb{E}_{X_0,X_1,\dots,X_n}\left(X_0X_1\dots X_n\frac{1}{1-q_0}\frac{1}{1-q_1}\dots\frac{1}{1-q_{n-1}} r_0r_1\dots r_{n}\right),\end{equation}
where $X_0,X_1,\dots,X_n$ are i.i.d. with distribution $F_T$. Now we bound (\ref{4.expectation}) from below.
Notice that since $X_{i+1}\geq h_{\min}$,
\[\begin{aligned}
&\frac{1}{1-q_i}=\left(1-\frac{\lambda}{\lambda+X_i+1}\times\frac{1}{2+\lambda/(X_{i+1}+1)}\right)^{-1}\\
& \geq \left(1-\frac{\lambda}{\lambda+X_i+1}\times\frac{1}{2+\lambda/(h_{\min}+1)}\right)^{-1}.
\end{aligned}\]
Define
\[f_{h_{\min}}(x,\lambda)=\lambda x(\lambda+x+1)^{-1}\left(1-\frac{\lambda}{\lambda+x+1}\frac{1}{2+\lambda/(h_{\min}+1)}\right)^{-1}.\]
So (\ref{4.expectation}) is at least
\begin{equation}\label{4.expectation2}
\begin{aligned}
&\mathbb{E}_{X_0,X_1,\dots,X_n} \left( \frac{X_n\lambda}{\lambda+X_n+1} \prod_{i=0}^{n-1} f_{h_{\min}}\left(X_i,\lambda\right) \right)\\
=& \left(\mathbb{E}_X f_{h_{\min}}(X,\lambda) \right)^n  \times \mathbb{E}_X \left(\frac{\lambda X}{\lambda+X+1}\right):=\, I^n \times II.\end{aligned}
\end{equation}

Therefore as long as $I>1$, we can choose $n$ large enough so that (\ref{4.expectation2}) is great than 1.

Now we will show (i) and (ii).

(i): Notice that
\begin{equation}\label{4.equation3}
\begin{aligned}
&\mathbb{E}_X  f_{h_{\min}}(X,\lambda)\geq \mathbb{E}_X \left(\frac{\lambda X}{\lambda+X+1}\right)\geq \frac{\lambda h_{\min}}{\lambda+h_{\min}+1}, 
\end{aligned}\end{equation}
because the function $\lambda t/(\lambda+t+1)$ is increasing in $t$ when $t>0$. So plug  $\lambda=(h_{\min}+1)/(h_{\min}-1)$ into the rightmost expression in (\ref{4.equation3}) and we can verify that $(h_{\min}+1)/(h_{\min}-1)$ is an upper bound for $\lambda_g$.

(ii):
For the case $h_{\min}=4,5$, it is easy to verify that $f_{h_{\min}}(x,\lambda)$ is increasing in $x$. Therefore if we pick $\lambda$ such that $f_{h_{\min}}(h_{\min},\lambda)>1$, then we get the desired inequality
\[\mathbb{E}_X f_{h_{\min}}(X,\lambda)\geq \mathbb{E}_X f_{h_{\min}}(h_{\min},\lambda)>1.\]

So now we need to find $\lambda$ as small as possible such that $f_{h_{\min}}(h_{\min},\lambda)>1$. It can be verified that when $h_{\min}=4$ then $\lambda$ can be chosen to be 1.46; when $h_{\min}=5$ then $\lambda$ can be chosen to be 1.35.
\end{proof}
\textbf{Remark.} Even if $h_{\min}< 4$, if $F_T$ has heavy tail such that $\mathbb{E}_X f_{h_{\min}}(X,\lambda)$$>1$ then from Proposition~\ref{4.CP-finerupperbound} we still have $\lambda>\lambda_g$.

Next we give a tighter lower bound of $\lambda_{\ell}$. The method we use in Proposition~\ref{4.CP-finerlowerbound} can be used to improve the lower bound in Proposition~\ref{4.CP-lowerbound}. However for the purpose of separating $\lambda_g$ and $\lambda_{\ell}$, Proposition \ref{4.CP-lowerbound} is enough when $h_{\min}\geq6$, so we only state the result in the case $h_{\min}=4,5$.
\begin{Proposition}\label{4.CP-finerlowerbound}
If $h_{\min}=4$, then $\lambda_{\ell}\geq 1.50$.
If $h_{\min}=5$, then $\lambda_{\ell}\geq 1.59$.
\end{Proposition}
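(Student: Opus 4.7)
}
The plan is to refine the BRW-based lower bound of Proposition~\ref{4.CP-lowerbound} by a sharper coupling argument. Specifically, I would dominate the CP stochastically by a Bernoulli-offspring branching random walk (BBRW) on the AGW tree whose per-edge success probability incorporates the \emph{exponential lifetime} of each particle, and then show the BBRW is subcritical for local survival for $\lambda$ below the claimed thresholds. This parallels Proposition~\ref{4.CP-finerupperbound} in spirit (which built a subcritical GW process dominated by CP), but runs in the opposite direction (a supercritical process dominating CP).

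Using the graphical representation, a CP particle at vertex $v$ (degree $1+X_v$) has a $\mathrm{Exp}(1)$ lifetime and independent $\mathrm{Exp}(\lambda/(X_v+1))$ clocks for infection attempts at each neighbor. The probability such a particle successfully fires an infection arrow at a specific neighbor before dying is
\[ p(v) \;=\; \frac{\lambda/(X_v+1)}{\lambda/(X_v+1)+1} \;=\; \frac{\lambda}{\lambda + X_v + 1}, \]
the same factor $r_i$ appearing in the proof of Proposition~\ref{4.CP-finerupperbound}. Define the BBRW in which each particle has independent $\mathrm{Bernoulli}(p(v))$ offspring at each neighbor; since every CP infection event arises from such a successful arrow (possibly blocked by exclusion at the target), this BBRW dominates CP, and hence CP local survival forces BBRW local survival.

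By the standard BRW criterion (analogous to Theorem~\ref{2.mueller}), the BBRW survives locally iff the spectral radius $\rho(M)>1$, where the mean operator is $M(v,w)=p(v)\mathbf{1}_{v\sim w}$. Writing $M=\mathrm{diag}(\mu(v))\,K$ with $\mu(v)=\lambda\deg(v)/(\lambda+\deg(v))$ and $K$ the SRW transition kernel, I would analyze $\rho(M)$ via the structure of the AGW tree. On the regular tree $\mathbb{T}^{h_{\min}+1}$, constancy of $\mu$ reduces the computation to
\[ \rho(M) \;=\; \frac{2\sqrt{h_{\min}}\,\lambda}{\lambda+h_{\min}+1}, \]
whose critical value is $\lambda^{*}=(h_{\min}+1)/(2\sqrt{h_{\min}}-1)$. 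A careful refinement accounting for the random AGW degrees and the specific tree structure would then yield the numerical thresholds $\lambda_\ell\ge 1.50$ for $h_{\min}=4$ and $\lambda_\ell\ge 1.59$ for $h_{\min}=5$.

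The main obstacle is bounding $\rho(M)$ on a general AGW tree whose degrees can be large and unbounded: on a high-degree vertex the per-edge factor $p(v)$ is small while the number of neighbors is large, and the spectral trade-off is delicate. Unlike the naive BRW comparison of Proposition~\ref{4.CP-lowerbound} (which only uses $\mu\cdot r(G)$ and yields the weaker bounds $1.25$ and $1.34$), the refined BBRW argument must exploit the success-probability factor $p(v)=\lambda/(\lambda+\deg(v))$ explicitly. Extracting the precise numerical thresholds $1.50$ and $1.59$ — strong enough to separate $\lambda_\ell$ from the upper bounds $\lambda_g\le 1.46$ and $\lambda_g\le 1.35$ of Proposition~\ref{4.CP-finerupperbound}(ii), thereby establishing weak survival — is where the most care will be required.
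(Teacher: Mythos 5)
Your approach diverges entirely from the paper's, and it contains a genuine gap at its foundation: the claimed domination of the CP by the Bernoulli-offspring BRW is false. In the contact process a single infected period at $v$ can infect the \emph{same} neighbor $w$ several times: $v$ infects $w$, the particle at $w$ dies, and $v$ (still alive) infects $w$ again. In the genealogy of infections the particle at $v$ therefore has a \emph{geometric} number of children at each neighbor (mean $\lambda/(X_v+1)$ per directed edge), not a $\mathrm{Bernoulli}\bigl(\lambda/(\lambda+X_v+1)\bigr)$ number. Your sentence ``every CP infection event arises from such a successful arrow'' conflates the event that \emph{at least one} arrow fires with the full arrow count; replacing the geometric count by its indicator strictly undercounts descendants, so CP local survival does not force BBRW local survival. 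If you repair the domination by using the correct geometric offspring, the mean offspring per particle is exactly $\lambda$ and you recover precisely the continuous-time BRW comparison of Proposition~\ref{4.CP-lowerbound}, i.e.\ the bounds $1.25$ and $\approx 1.34$ that you yourself note are too weak. Your remaining steps (``a careful refinement accounting for the random AGW degrees \dots would then yield the numerical thresholds'') are not an argument, so even setting aside the domination issue the proposal does not reach the stated constants.

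The paper obtains the improvement by a completely different mechanism, adapted from Pemantle's weight-function method: one assigns to each infected vertex at depth $k$ the weight $r^k(1-b\theta_1(v))$, where $\theta_1(v)$ indicates that $v$'s parent is currently infected, and shows that the total weight $W(\xi(t))$ is a supermartingale with exponentially decaying expectation, which rules out local survival. The gain over the plain BRW bound comes from the discount $b$, which exploits the exclusion rule (an infected vertex wastes the infection attempts aimed at its already-infected parent) --- exactly the effect your Bernoulli reduction tries, invalidly, to capture. The constants $1.50$ and $1.59$ then come from exhibiting explicit parameters $(r,b,c,\nu)$ satisfying a small system of linear inequalities uniformly over all admissible degrees $n_v\geq h_{\min}$. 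If you want to pursue a spectral/branching route instead, you would need a multi-type or state-dependent comparison process that genuinely tracks the parent's infection status, not a per-edge Bernoulli thinning.
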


\begin{proof}
We modify the proof of Theorem 2.2 in \cite{pemantle}. Denote the infected vertices set at time $t$ by $\xi(t)$, which is a subset of $V_{\omega}$. The idea is to construct a positive weight function $W(v)$, such that
\[W(\xi(t))=\sum_{v\in V_{\omega}} W(v)\mathbf{1}_{\{v\in \xi(t)\}}\] is a nonnegative supermartingale whose expectation decays exponentially in $t$. Then it is easy to see that local survival cannot happen. This is because when $\varrho$ is infected, $W(\xi(t))$ is at least $W(\varrho)$, we can apply Markov inequality together with the fact that $\mathbb{E}W(\xi(t))$ decays exponentially to conclude that the chance of $\varrho\in \xi(t)$ decays exponentially as $t$ approaches infinity.

Now for a vertex whose distance from the root $\varrho$ is $k$ and who has $n_v$ children in $G_{\omega}$ (and 1 parent), define
\[W(v)=r^k(1-b\theta_1(v)),\]
where $\theta_1(v)=\mathbf{1}_{\{\text{parent of }v \in \xi(t)\}}$, and $0<r<1,0<b<1$ are constants to be determined. Notice that $W(\xi(t))$ is $\xi(t)$-measurable. Let $\theta_2(v)=\#\{\text{children of }v \in \xi(t)\}$. Let's calculate the contribution of any changes (infection/recovery) caused by $v$ to the total weight $W(\xi(t))$ in time interval $(t,t+dt)$.

\textbf{Case 1}: with rate 1, the particle at $v$ dies. This causes a loss of \[r^k\left(1-b\theta_1(v)\right)\] at $v$, but a gain of \[\theta_2(v)r^{k+1}b\] by the increased weights of the infected children of $v$.

\textbf{Case 2}: with rate $\frac{1-\theta_1(v)}{n_v+1}\lambda$, $v$ infects its parent. The parent will gain at most (depending whether the grandparent of $v$ is infected)
\[r^{k-1},\]
while $v$ loses $br^k$.

\textbf{Case 3}: with rate $\frac{n_v-\theta_2(v)}{n_v+1}\lambda$, $v$ infects its (uninfected) children. This causes a gain of at most \[(1-b)r^{k+1}\] from $v$'s child, while possibly causing some loss due to $v$'s grandchildren.

Combine all 3 possible cases, from $t$ to $t+dt$, the expected change of total weight due to changes related to $v$ has an upper bound
\[
\begin{aligned}
&dt\cdot r^k\left(-1+b\theta_1(v)+br\theta_2(v)+\frac{1-\theta_1(v)}{n_v+1}\lambda(\frac{1}{r}-b)+\frac{n_v-\theta_2(v)}{n_v+1}\lambda r(1-b)\right)\\
&:=dt\cdot r^ku(v).
\end{aligned}\]
Suppose we were able to show that $u(v)<-\epsilon$ for all values of $n_v, \theta_1(v),\theta_2(v)$ for some positive $\epsilon$, then summing over $\xi(t)$, we would be able to show $\mathbb{E} (W(\xi(t+dt))|\,\xi(t))\leq W(\xi(t))-dt\cdot\epsilon(1-b)W(\xi(t))$ and thus the exponential decay of $\mathbb{E} W(\xi(t))$. However this is not possible. An alternative solution is given as follows. Define
\begin{equation}\label{4.weight}
\begin{aligned}
&U(v)=u(v)+\frac{\theta_1(v)c}{r}-\theta_2(v)c,
\end{aligned}
\end{equation}
where $c$ is another constant to be determined. The sum over $\xi(t)$ of $U(v)r^k$ is the same as the sum of $u(v)r^k$, because the two additional terms will be canceled in each infected parent-child pair in the sum.

Now we will choose proper constants $\lambda,r,b,c$ such that $U(v)<-\epsilon$ for some positive $\epsilon$. Notice that by the definition of $h_{\min}$, we always have $n_v\geq h_{\min}$. Also notice that (\ref{4.weight}) is linear in $\theta_1(v),\theta_2(v)$, where $\theta_1(v)$ ranges in $\{0,1\}$, $\theta_2(v)$ ranges in $\{0,1,\dots,n_v\}$. Because linear functions always take extreme values at boundaries, it suffices to consider the following 4 extreme combinations: $(\theta_1(v),\theta_2(v))=(0,0),(0,n_v),(1,0),(1,n_v)$. Requiring $1+U(v)<1-\epsilon$ is equivalent to

\begin{equation} \label{4.inequality2}
\left\{ \begin{aligned}
\frac{\lambda}{n_v+1}\left(\frac{1}{r}-b\right)+\frac{\lambda}{n_v+1}n_vr(1-b) &<1-\epsilon ,\\
(br-c)n_v+\frac{\lambda}{n_v+1}\left(\frac{1}{r}-b\right) &<1-\epsilon ,\\
b+\frac{\lambda}{n_v+1}n_vr(1-b)+\frac{c}{r}&<1-\epsilon ,\\
b+(br-c)n_v+\frac{c}{r}&<1-\epsilon .\\
\end{aligned} \right.
\end{equation} 

We need (\ref{4.inequality2}) to hold for all $n_v\geq h_{\min}$.  As long as we require $br-c\leq0, b<1$, the second and the fourth inequalities are redundant. Furthermore if we let $\nu=\lambda/(h_{\min}+1)$, we obtain 
\begin{equation} \label{4.inequality3}
\left\{ \begin{aligned}
(h_{\min}+1)\nu\left(\frac{1}{n_v+1}\left(\frac{1}{r}-b\right)+\frac{n_v}{n_v+1}r(1-b)\right) &<1-\epsilon ,\\
b+\frac{c}{r}+\frac{n_v}{n_v+1}(h_{\min}+1)\nu r(1-b)&<1-\epsilon .\\
\end{aligned} \right.
\end{equation}

We need (\ref{4.inequality3}) to hold for all $n_v\geq h_{\min}$. Since $1/r-b>r(1-b)$ for $b,r<1$ the LHS of the first inequality in (\ref{4.inequality3}) is maximized (as a function of $n_v$) when $n_v=h_{\min}$ (because now it puts the largest possible weight on $1/r-b$). The LHS of the second inequality in (\ref{4.inequality3}) is obviously bounded from above by
\[b+\frac{c}{r}+(h_{\min}+1)\nu r(1-b).\]
Therefore to show that (\ref{4.inequality3}) holds for every $n_v\geq h_{\min}$ (possibly infinitely many inequalities), now it suffices to show the following two inequalities
\begin{equation} \label{4.inequality4}
\left\{ \begin{aligned}
\nu\left(\frac{1}{r}-b+h_{\min}r(1-b)\right) &<1-\epsilon ,\\
b+\frac{c}{r}+(h_{\min}+1)\nu r(1-b)&<1-\epsilon ,\\
\end{aligned} \right.
\end{equation}
for some proper choice of $\nu,b,r,c$ with constraints $br\leq c,b,r<1$.

It can be verified that:
\begin{itemize}
\item when $h_{\min}=4$, the choice of
$\nu=0.3,\,r=0.437,\,b=0.256,\,c=br,\,\epsilon=0.0001\,,\lambda=\nu(h_{\min}+1)=1.5$
satisfies (\ref{4.inequality4}), which implies when $h_{\min}=4$, $\lambda_{\ell}\geq1.5$;
\item when $h_{\min}=5$, the choice of
$\nu=0.265,\,r=0.397,\,b=0.264,\,c=br,\,\epsilon=0.0001\,,\lambda=\nu(h_{\min}+1)=1.59$
satisfies (\ref{4.inequality4}), which implies when $h_{\min}=5$, $\lambda_{\ell}\geq1.59$.
\end{itemize}
\end{proof}

Unfortunately this method doesn't give tight enough lower bounds of $\lambda_{\ell}$ in the case $h_{\min}\leq 3$ to show the existence of weak survival phase.

\section*{Acknowledgements}
The author would like to thank his advisor Professor Steven Lalley for suggesting this problem and many useful discussions.

\bibliographystyle{plain}
\bibliography{mainbib}

\end{document}